\newtheorem{Theorem}[equation]{Theorem}
\newtheorem{Corollary}[equation]{Corollary}
\theoremstyle{definition}
\newtheorem{Definition}[equation]{Definition}
\theoremstyle{remark}
\newtheorem{Remark}[equation]{Remark}
\numberwithin{equation}{section}
\newtheorem{Claim}[equation]{Claim}
\DeclareMathOperator{\ev}{ev}
\DeclareMathOperator{\ad}{ad}
\newcommand{\ve}{\varepsilon}
\begin{document}
\title{The surjectivity of the evaluation map of the affine super Yangian}
\author{Mamoru Ueda}
\date{}
\maketitle
\begin{abstract}
There exists a homomorphism from the affine super Yangian to the completion of the universal enveloping algebra of $\widehat{\mathfrak{gl}}(m|n)$, called the evaluation map. In this paper, we show that the image of this homomorphism is dense. Via this homomorphism, we obtain irreducible representations of the affine super Yangian.\footnote[0]{{\bf MSC code}; 17B37} 
\end{abstract}
\section{Introduction}
Drinfeld (\cite{D1}, \cite{D2}) defined the Yangian of a finite dimensional simple Lie algebra $\mathfrak{g}$ in order to obtain a solution of the Yang-Baxter equation. The Yangian is a quantum group which is a deformation of the current algebra $\mathfrak{g}[z]$. The definition of Yangian naturally extends to the case that $\mathfrak{g}$ is a Kac-Moody Lie algebra. In the case when $\mathfrak{g}$ is $\widehat{\mathfrak{sl}}(n)$, the Yangian associated with $\mathfrak{g}$ is a deformation of the universal central extension of $\mathfrak{sl}(n)[u^{\pm 1}, v]$ (\cite{Gu1}). 

It is well-known that the Yangians are closely related to $W$-algebras. At first, Ragoucy and Sorba (\cite{RS}) showed that there exist surjective homomorphisms from Yangians of type $A$ to rectangular finite $W$-algebras of type $A$. More generally, Brundan and Kleshchev (\cite{BK}) constructed a surjective homomorphism from a shifted Yangian, a subalgebra of the Yangian of type $A$, to finite $W$-algebras of type $A$. In the affine case, using a geometric realization of the Yangian, Schiffman and Vasserot (\cite{SV}) have constructed a surjective homomorphism from the Yangian of $\widehat{\mathfrak{gl}}(1)$ to the universal enveloping algebra of the principal $W$-algebra of type $A$, and proved the celebrated AGT conjecture (\cite{Ga}, \cite{BFFR}).

The relationship between Yangians and $W$-algebras are also studied in the case of Lie superalgebras. Provided that $\mathfrak{g}$ is $\mathfrak{sl}(m|n)$, Stukopin defined the Yangian of $\mathfrak{sl}(m|n)$, called the super Yangian (see \cite{S} and \cite{G}). It is a deformation of the current algebra $\mathfrak{sl}(m|n)[z]$. In the affine super setting, the affine super Yangian was defined in \cite{U2} and is a deformation of $\widehat{\mathfrak{sl}}(m|n)[z]$. 

In the finite super case, Briot and Ragoucy \cite{BR} constructed a surjective homomorphism from the super Yangian to rectangular finite $W$-superalgebras of type $A$. In the recent paper \cite{GLPZ}, Gaberdiel, Li, Peng and H. Zhang defined the Yangian $\widehat{\mathfrak{gl}}(1|1)$ for the affine Lie superalgebra $\widehat{\mathfrak{gl}}(1|1)$ and obtained the similar result to that of \cite{SV} in the super setting. Moreover, \cite{U3} gives a surjective homomorphism from the affine super Yangian to the universal enveloping algebra of the rectangular $W$-superalgebra of type $A$. Thus, rational representations of rectangular $W$-superalgebras of type $A$ can be seen as those of the affine super Yangians.

However, we know only a little about irreducible representations of the affine super Yangian. In the case when $\mathfrak{g}$ is $\widehat{\mathfrak{sl}}(n)$, the easiest irreducible representations of the affine Yangian are obtained by the pullback of irreducible highest weight representations of $\widehat{\mathfrak{gl}}(n)$ since there exists a surjective homomorphism from the affine Yangian to the completion of the universal enveloping algebra of $\widehat{\mathfrak{gl}}(n)$ (\cite{Gu1}, \cite{K1}, and \cite{K2}). In \cite{K2}, Kodera showed that the image of this homomorphism topologically generates the completed universal enveloping algebra by using a braid group action on the affine Yangian. It is natural to try to obtain irreducible representations of the affine super Yangian in the similar way. In \cite{U2}, we have constructed a homomorphism from the affine super Yangian to the completion of the universal enveloping algebra of $\widehat{\mathfrak{gl}}(m|n)$. However, we cannot prove that he image of this homomorphism is dense in the similar way to the one in \cite{K2} since we have no braid group actions on the affine super Yangian. In this paper, we show the statement in the more primitive way. Owing to this result, we obtain irreducible representations of the affine super Yangian via this homomorphism.
\section*{Acknowledgement}
The author wishes to express his gratitude to his supervisor Tomoyuki Arakawa for suggesting lots of advice to improve this paper. The author is also grateful for the support and encouragement of Ryosuke Kodera. I am also grateful for Oleksandr Tsymbaliuk for pointing out some mistakes. This work was supported by Iwadare Scholarship. 
\section{Affine Super Yangians}
First, we recall the definition of the affine super Yangian (see \cite{U2}). In this paper, we denote $xy+yx$ as $\{x,y\}$. Moreover, we set 
\begin{gather*}
p(i)=\begin{cases}
0&(1\leq i\leq m),\\
1&(m+1\leq i\leq m+n).
\end{cases}
\end{gather*}
\begin{Definition}\label{Def}
Suppose that $m, n\geq2$ and $m\neq n$. The affine super Yangian $Y_{\ve_1,\ve_2}(\widehat{\mathfrak{sl}}(m|n))$ is the associative superalgebra over $\mathbb{C}$ generated by $x_{i,r}^{+}, x_{i,r}^{-}, h_{i,r}$ $(i \in\{0,1,\cdots,m+n-1\}, r \in \mathbb{Z}_{\geq 0})$ with parameters $\ve_1, \ve_2 \in \mathbb{C}$ subject to the relations:
\begin{gather}
	[h_{i,r}, h_{j,s}] = 0, \label{eq1.1}\\
	[x_{i,r}^{+}, x_{j,s}^{-}] = \delta_{ij} h_{i, r+s}, \label{eq1.2}\\
	[h_{i,0}, x_{j,r}^{\pm}] = \pm a_{ij} x_{j,r}^{\pm},\label{eq1.3}\\
	[h_{i, r+1}, x_{j, s}^{\pm}] - [h_{i, r}, x_{j, s+1}^{\pm}] 
	= \pm a_{ij} \dfrac{\varepsilon_1 + \varepsilon_2}{2} \{h_{i, r}, x_{j, s}^{\pm}\} 
	- m_{ij} \dfrac{\varepsilon_1 - \varepsilon_2}{2} [h_{i, r}, x_{j, s}^{\pm}],\label{eq1.4}\\
	[x_{i, r+1}^{\pm}, x_{j, s}^{\pm}] - [x_{i, r}^{\pm}, x_{j, s+1}^{\pm}] 
	= \pm a_{ij}\dfrac{\varepsilon_1 + \varepsilon_2}{2} \{x_{i, r}^{\pm}, x_{j, s}^{\pm}\} 
	- m_{ij} \dfrac{\varepsilon_1 - \varepsilon_2}{2} [x_{i, r}^{\pm}, x_{j, s}^{\pm}],\label{eq1.5}\\
	\sum_{w \in \mathfrak{S}_{1 + |a_{ij}|}}[x_{i,r_{w(1)}}^{\pm}, [x_{i,r_{w(2)}}^{\pm}, \dots, [x_{i,r_{w(1 + |a_{ij}|)}}^{\pm}, x_{j,s}^{\pm}]\dots]] = 0\  (i \neq j),\label{eq1.6}\\
	[x^\pm_{i,r},x^\pm_{i,s}]=0\ (i=0, m),\label{eq1.7}\\
	[[x^\pm_{i-1,r},x^\pm_{i,0}],[x^\pm_{i,0},x^\pm_{i+1,s}]]=0\ (i=0, m),\label{eq1.8}
\end{gather}
where\begin{gather*}
a_{ij} =
	\begin{cases}
	{(-1)}^{p(i)}+{(-1)}^{p(i+1)}  &\text{if } i=j, \\
	         -{(-1)}^{p(i+1)}&\text{if }j=i+1,\\
	         -{(-1)}^{p(i)}&\text{if }j=i-1,\\
	        1 &\text{if }(i,j)=(0,m+n-1),(m+n-1,0),\\
		0  &\text{otherwise,}
	\end{cases}\\
	 m_{i,j}=
	\begin{cases}
	-{(-1)}^{p(i+1)} &\text{if } i=j + 1,\\
		{(-1)}^{p(i)} &\text{if } i=j - 1,\\
	        -1 &\text{if }(i,j)=(0,m+n-1),\\
	        1 &\text{if }(i,j)=(m+n-1,0),\\
		0  &\text{otherwise,}
	\end{cases}
\end{gather*}
and the generators $x^\pm_{m, r}$ and $x^\pm_{0, r}$ are odd and all other generators are even.
\end{Definition}
One of the difficulty of Definition~\ref{Def} is that the number of generators is infinite. There exists a presentation of the affine super Yangian such that the number of generators are finite. 

First, we show that the affine super Yangian is generated by $h_{i,r}$ and $x^\pm_{i,r}$\ $(i \in \{0,1,\cdots,m+n-1\}, r=0,1)$. Let us set $\tilde{h}_{i,1} = {h}_{i,1} - \dfrac{\ve_1 + \ve_2}{2} h_{i,0}^2$. Using $\tilde{h}_{i,1}$, we can rewrite \eqref{eq1.4} as
\begin{equation}
[\tilde{h}_{i,1}, x_{j,r}^{\pm}] = \pm a_{ij}\left(x_{j,r+1}^{\pm}-m_{ij}\dfrac{\varepsilon_1 - \varepsilon_2}{2} x_{j, r}^{\pm}\right).\label{11111}
\end{equation}
By \eqref{11111}, we find that $Y_{\ve_1,\ve_2}(\widehat{\mathfrak{sl}}(m|n))$ is generated by $x_{i,r}^{+}, x_{i,r}^{-}, h_{i,r}$ $(i \in \{0,1,\cdots,m+n-1\}, r = 0,1)$. In fact, by \eqref{11111} and \eqref{eq1.2}, we have the following relations;
\begin{gather}
x^\pm_{i,r+1}=\pm\dfrac{1}{a_{i,i}}[\tilde{h}_{i,1},x^\pm_{i,r}],\qquad h_{i,r+1}=[x^+_{i,r+1},x^-_{i,0}]\quad\text{if}\quad i\neq m,0,\label{eq1297}\\
x^\pm_{i,r+1}=\pm\dfrac{1}{a_{i+1,i}}[\tilde{h}_{i+1,1},x^\pm_{i,r}]+m_{i+1,i}\dfrac{\varepsilon_1 - \varepsilon_2}{2} x_{i, r}^{\pm},\qquad h_{i,r+1}=[x^+_{i,r+1},x^-_{i,0}]\quad\text{if}\quad i=m,0\label{eq1298}
\end{gather}
for all $r\geq1$. In the following theorem, we construct the minimalistic presentation of the affine super Yangian $Y_{\ve_1,\ve_2}(\widehat{\mathfrak{sl}}(m|n))$ whose generators are $x_{i,r}^{+}, x_{i,r}^{-}, h_{i,r}$ $(i \in \{0,1,\cdots,m+n-1\}, r = 0,1)$. 
\begin{Theorem}[\cite{U2}, Theorem~3.17]\label{Mini}
Suppose that $m, n\geq2$ and $m\neq n$. The affine super Yangian $Y_{\ve_1,\ve_2}(\widehat{\mathfrak{sl}}(m|n))$ is isomorphic to the superalgebra generated by $x_{i,r}^{+}, x_{i,r}^{-}, h_{i,r}$ $(i \in\{0,1,\cdots,m+n-1\}, r = 0,1)$ subject to the defining relations:
\begin{gather}
[h_{i,r}, h_{j,s}] = 0,\label{eq2.1}\\
[x_{i,0}^{+}, x_{j,0}^{-}] = \delta_{ij} h_{i, 0},\label{eq2.2}\\
[x_{i,1}^{+}, x_{j,0}^{-}] = \delta_{ij} h_{i, 1} = [x_{i,0}^{+}, x_{j,1}^{-}],\label{eq2.3}\\
[h_{i,0}, x_{j,r}^{\pm}] = \pm a_{ij} x_{j,r}^{\pm},\label{eq2.4}\\
[\tilde{h}_{i,1}, x_{j,0}^{\pm}] = \pm a_{ij}\left(x_{j,1}^{\pm}-m_{ij}\dfrac{\varepsilon_1 - \varepsilon_2}{2} x_{j, 0}^{\pm}\right),\label{eq2.5}\\
[x_{i, 1}^{\pm}, x_{j, 0}^{\pm}] - [x_{i, 0}^{\pm}, x_{j, 1}^{\pm}] = \pm a_{ij}\dfrac{\varepsilon_1 + \varepsilon_2}{2} \{x_{i, 0}^{\pm}, x_{j, 0}^{\pm}\} - m_{ij} \dfrac{\varepsilon_1 - \varepsilon_2}{2} [x_{i, 0}^{\pm}, x_{j, 0}^{\pm}],\label{eq2.6}\\
(\ad x_{i,0}^{\pm})^{1+|a_{ij}|} (x_{j,0}^{\pm})= 0\ (i \neq j), \label{eq2.7}\\
[x^\pm_{i,0},x^\pm_{i,0}]=0\ (i=0, m),\label{eq2.8}\\
[[x^\pm_{i-1,0},x^\pm_{i,0}],[x^\pm_{i,0},x^\pm_{i+1,0}]]=0\ (i=0, m),\label{eq2.9}
\end{gather}
where the generators $x^\pm_{m, r}$ and $x^\pm_{0, r}$ are odd and all other generators are even.
\end{Theorem}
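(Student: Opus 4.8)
The plan is to exhibit mutually inverse superalgebra homomorphisms between $Y_{\ve_1,\ve_2}(\widehat{\mathfrak{sl}}(m|n))$ and the superalgebra $Y'$ presented by the generators $x_{i,r}^{\pm},h_{i,r}$ $(i\in\{0,\dots,m+n-1\},\ r=0,1)$ and the relations \eqref{eq2.1}--\eqref{eq2.9}. First, every one of \eqref{eq2.1}--\eqref{eq2.9} is either a special case of, or an immediate consequence of, the relations \eqref{eq1.1}--\eqref{eq1.8}: \eqref{eq2.3} comes from \eqref{eq1.2}; \eqref{eq2.5} is \eqref{11111} at $r=0$, hence follows from \eqref{eq1.4}; \eqref{eq2.6}--\eqref{eq2.9} are the mode-zero instances of \eqref{eq1.5}--\eqref{eq1.8}; and \eqref{eq2.1}, \eqref{eq2.2}, \eqref{eq2.4} are restrictions of \eqref{eq1.1}, \eqref{eq1.2}, \eqref{eq1.3}. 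Since we have already observed, via \eqref{eq1297}--\eqref{eq1298}, that $Y_{\ve_1,\ve_2}(\widehat{\mathfrak{sl}}(m|n))$ is generated by the elements with $r=0,1$, this yields a surjective homomorphism $p\colon Y'\to Y_{\ve_1,\ve_2}(\widehat{\mathfrak{sl}}(m|n))$ sending each generator to the element of the same name. It remains to construct an inverse.

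To that end I would work inside $Y'$, set $\tilde h_{i,1}:=h_{i,1}-\tfrac{\ve_1+\ve_2}{2}h_{i,0}^2$, and \emph{define} elements $x_{i,r}^{\pm}$ and $h_{i,r}$ for all $r\geq 2$ by the recursions \eqref{eq1297} (for $i\neq 0,m$, where $a_{ii}\neq0$) and \eqref{eq1298} (for $i=0,m$, where $a_{i+1,i}\neq0$). The task is then to verify that these elements satisfy the full list of relations \eqref{eq1.1}--\eqref{eq1.8}, which produces a homomorphism $q\colon Y_{\ve_1,\ve_2}(\widehat{\mathfrak{sl}}(m|n))\to Y'$. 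Once $q$ is available, the two compositions are both the identity for cheap reasons: $q\circ p$ fixes the generators $x_{i,r}^{\pm},h_{i,r}$ $(r=0,1)$ of $Y'$, so $q\circ p=\id$; and $p\circ q$ fixes the generators $x_{i,r}^{\pm},h_{i,r}$ $(r=0,1)$ of $Y_{\ve_1,\ve_2}(\widehat{\mathfrak{sl}}(m|n))$ and, because \eqref{eq1297}--\eqref{eq1298} hold there too, also fixes every higher mode, so $p\circ q=\id$. Hence $p$ is the asserted isomorphism.

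The real content is the relation check for $q$, which I would carry out as a chain of inductions on the mode indices, each inductive step shifting an index by bracketing with $\tilde h_{i,1}$ and then applying the $\mathbb{Z}/2$-graded Jacobi identity, with signs tracked through the parities of $x_{0,r}^{\pm}$ and $x_{m,r}^{\pm}$. In order: (i) upgrade \eqref{eq2.5} to the all-modes relation \eqref{eq1.4}, i.e. prove $[\tilde h_{i,1},x_{j,r}^{\pm}]=\pm a_{ij}(x_{j,r+1}^{\pm}-m_{ij}\tfrac{\ve_1-\ve_2}{2}x_{j,r}^{\pm})$ by induction on $r$; (ii) deduce the commutation relations \eqref{eq1.3} in all modes and, at the same time, that $[x_{i,r}^{+},x_{j,s}^{-}]$ vanishes for $i\neq j$ and depends only on $r+s$ for $i=j$, so that setting $h_{i,r+s}:=[x_{i,r}^{+},x_{i,s}^{-}]$ is consistent and recovers \eqref{eq1.2}; (iii) derive the commutativity \eqref{eq1.1} of all the $h_{i,r}$; (iv) establish \eqref{eq1.5}; (v) finally, propagate the Serre-type relations \eqref{eq1.6}, \eqref{eq1.7}, \eqref{eq1.8} from their mode-zero forms \eqref{eq2.7}, \eqref{eq2.8}, \eqref{eq2.9} by bracketing repeatedly with $\tilde h_{i,1}$ and collecting the error terms supplied by \eqref{eq1.4}.

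I expect step (v), together with the consistency statement in (ii), to be the main obstacle. The higher Serre relations do not follow formally: one must commute $\tilde h_{i,1}$ through a long nested bracket of $x^{\pm}$'s, and the resulting correction terms cancel only after invoking the already-established relations from (i)--(iv); the exceptional nodes $i=0,m$, where $a_{ii}=0$ and the auxiliary relations \eqref{eq1.7}--\eqref{eq1.8} must be used, are the most delicate, and the super signs make this bookkeeping heavier than in the non-super affine setting treated in \cite{K2}. The claim that $[x_{i,r}^{+},x_{i,s}^{-}]$ depends only on $r+s$ is the classical pressure point of minimalistic-presentation arguments of this type and would most naturally be proved by an induction interleaved with (i).
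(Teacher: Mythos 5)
First, a point of context: this paper does not prove Theorem~\ref{Mini} at all --- it imports the statement from \cite{U2}, Theorem~3.17 --- so there is no in-paper argument to measure you against; the comparison has to be with the standard proof of such ``minimalistic presentation'' results, which is what \cite{U2} carries out. Your skeleton matches it exactly: the surjection $p\colon Y'\to Y_{\ve_1,\ve_2}(\widehat{\mathfrak{sl}}(m|n))$ obtained by checking that \eqref{eq2.1}--\eqref{eq2.9} are specializations of \eqref{eq1.1}--\eqref{eq1.8} and invoking the generation statement \eqref{eq1297}--\eqref{eq1298}; the construction of a candidate inverse $q$ by defining the higher modes in $Y'$ through the same recursions; and the formal observation that both composites are the identity once $q$ is known to be a well-defined homomorphism. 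All of that is correct as written, including the attention to the fact that $a_{ii}=0$ at $i=0,m$ forces the use of $\tilde h_{i+1,1}$ there.

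The gap is that everything you yourself call ``the real content'' is deferred rather than done. The theorem lives or dies on the inductive verification that the elements defined by \eqref{eq1297}--\eqref{eq1298} inside $Y'$ satisfy \eqref{eq1.1}--\eqref{eq1.8}, and none of those inductions is executed: in particular (a) the consistency statement that $[x^+_{i,r},x^-_{i,s}]$ depends only on $r+s$ (without which $h_{i,r}$ is not even well defined in $Y'$), (b) the derivation of \eqref{eq1.4} and \eqref{eq1.5} in all modes, and (c) the propagation of the Serre-type relations. For (c) your description is too optimistic: \eqref{eq1.6} is a sum over $\mathfrak{S}_{1+|a_{ij}|}$ of nested brackets with independent mode indices, and it does not fall out of ``bracketing \eqref{eq2.7} repeatedly with $\tilde h_{i,1}$''; one must first prove a one-index-at-a-time raising identity and then a genuine symmetrization argument, and at the nodes $i=0,m$ the auxiliary relations \eqref{eq2.8}--\eqref{eq2.9}, which have no analogue in the non-super affine case, must be fed into the induction in a way you flag but do not specify. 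So the approach is the right one and nothing you wrote is false, but as it stands the proposal is an outline of the proof in \cite{U2}, not a proof.
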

Since the definition of the affine super Yangian is very complicated, it is not clear whether the affine super Yangian is trivial or not. However, there exists the non-trivial homomorphism from the affine super Yangian to the completion of $U(\widehat{\mathfrak{gl}}(m|n))$. This homomorphism is called as the evaluation map. In order to introduce the evaluation map, we set some notations.

First, let us recall the definition of a Lie superalgebra $\widehat{\mathfrak{gl}}(m|n)$. We set a Lie superalgebra $\widehat{\mathfrak{gl}}(m|n)$ as $\mathfrak{gl}(m|n)\otimes\mathbb{C}[t^{\pm1}]\oplus\mathbb{C}c\oplus\mathbb{C}z$ whose commutator relations are following;
\begin{gather*}
[x\otimes t^u, y\otimes t^v]=\begin{cases}
[x,y]\otimes t^{u+v}+u\delta_{u+v,0}\text{str}(xy)c\quad\text{ if }x,\ y\in\mathfrak{sl}(m|n),\\
[e_{a,b},e_{i,i}]\otimes t^{u+v}+u\delta_{u+v,0}\text{str}(e_{a,b}e_{i,i})c+u\delta_{u+v,0}\delta_{a,b}{(-1)}^{p(a)+p(i)}z\\
\qquad\qquad\qquad\qquad\qquad\qquad\qquad\text{ if }x=e_{a,b},\ y=e_{i,i},
\end{cases}\\
\text{$c$ and $z$ are central elements of }\widehat{\mathfrak{gl}}(m|n),
\end{gather*}
where $\text{str}$ is a supertrace of $\mathfrak{gl}(m|n)$.
In this section, we assume that the central element $c$ and $z$ are not indeterminates but complex numbers. Taking the degree of $U(\widehat{\mathfrak{gl}}(m|n))$ determined by
\begin{equation*}
\text{deg}(E_{i,j}(s))=s,\quad\text{deg}(c)=\text{deg}(z)=0,
\end{equation*}
we define $U(\widehat{\mathfrak{gl}}(m|n))_{{\rm comp}}$ as the degreewise completion of $U(\widehat{\mathfrak{gl}}(m|n))$ in the sense of \cite{MNT}. 
Let us set
\begin{gather*}
 \delta(i\leq j)=\begin{cases}
 1&(i\leq j),\\
 0&(i>j).
 \end{cases},\\
 h_i=\begin{cases}
-E_{1,1}-E_{m+n,m+n}+c&(i=0),\\
{(-1)}^{p(i)}E_{ii}-{(-1)}^{p(i+1)}E_{i+1,i+1}&(1\leq i\leq m+n-1),
\end{cases}\\
x^+_i=\begin{cases}
E_{m+n,1}\otimes t&(i=0),\\
E_{i,i+1}&(\text{otherwise}),
\end{cases}
\quad x^-_i=\begin{cases}
-E_{1,m+n}\otimes t^{-1}&(i=0),\\
{(-1)}^{p(i)}E_{i+1,i}&(\text{otherwise}).
\end{cases}
\end{gather*}
Then, we are in the position to introduce the evaluation map.
\begin{Theorem}[\cite{U2}, Theorem~5.1]\label{thm:main}
Let $\alpha$ be a complex number. We also assume that $\hbar c =(-m+n) \ve_1$ and $z=1$. 
Then, there exists an algebra homomorphism 
$
\ev_{\ve_1,\ve_2} \colon Y_{\ve_1,\ve_2}(\widehat{\mathfrak{sl}}(m|n)) \to U(\widehat{\mathfrak{gl}}(m|n))_{{\rm comp}}
$
uniquely determined by 
\begin{gather*}
	\ev_{\ve_1,\ve_2}(x_{i,0}^{+}) =x^+_i,\quad\ev_{\ve_1,\ve_2}(x_{i,0}^{-}) = x^-_i,\quad\ev_{\ve_1,\ve_2}(h_{i,0}) =h_i,
\end{gather*}
\begin{gather*}
	\ev_{\ve_1,\ve_2}(h_{i,1}) = \begin{cases}
		(\alpha - (m-n) \varepsilon_1)h_{0} + \hbar E_{m+n,m+n} (E_{1,1}-c) \\
		\ -\hbar \displaystyle\sum_{s \geq 0} \limits\displaystyle\sum_{k=1}^{m+n}\limits{(-1)}^{p(k)}E_{m+n,k}(-s) E_{k,m+n}(s)\\
		\quad-\hbar \displaystyle\sum_{s \geq 0}\limits\displaystyle\sum_{k=1}^{m+n}\limits{(-1)}^{p(k)}E_{1,k}(-s-1) E_{k,1}(s+1)\qquad\qquad\qquad\qquad\text{ if $i = 0$},\\
		\\
		(\alpha - (i-2\delta(i\geq m+1)(i-m))\varepsilon_1) h_{i} -{(-1)}^{p(E_{i,i+1})} \hbar E_{i,i}E_{i+1,i+1} \\
		\ + \hbar{(-1)}^{p(i)} \displaystyle\sum_{s \geq 0}  \limits\displaystyle\sum_{k=1}^{i}\limits{(-1)}^{p(k)} E_{i,k}(-s) E_{k,i}(s)\\
		\quad +\hbar{(-1)}^{p(i)} \displaystyle\sum_{s \geq 0} \limits\displaystyle\sum_{k=i+1}^{m+n}\limits {(-1)}^{p(k)}E_{i,k}(-s-1) E_{k,i}(s+1) \\
		\qquad -\hbar{(-1)}^{p(i+1)}\displaystyle\sum_{s \geq 0}\limits\displaystyle\sum_{k=1}^{i}\limits{(-1)}^{p(k)}E_{i+1,k}(-s) E_{k,i+1}(s)\\
		 \qquad\quad-\hbar{(-1)}^{p(i+1)}\displaystyle\sum_{s \geq 0}\limits\displaystyle\sum_{k=i+1}^{m+n} \limits{(-1)}^{p(k)}E_{i+1,k}(-s-1) E_{k,i+1}(s+1)\\
		\qquad\qquad\qquad\qquad\qquad\qquad\qquad\qquad\qquad\qquad\qquad\qquad\qquad\qquad \text{ if $i \neq 0$},
	\end{cases}
\end{gather*}
where $\hbar=\ve_1+\ve_2$ and $E_{i,j}(s)=E_{i,j}\otimes t^s$.
\end{Theorem}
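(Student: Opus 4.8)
The plan is to invoke the minimalistic presentation of Theorem~\ref{Mini}, so that it suffices to prescribe $\ev_{\ve_1,\ve_2}$ on the finite generating set $x_{i,0}^{\pm},h_{i,0},h_{i,1}$ ($0\le i\le m+n-1$) by the displayed formulas, to record the images of $x_{i,1}^{\pm}$ that relation \eqref{eq2.5} then forces, and to verify that \eqref{eq2.1}--\eqref{eq2.9} hold in $U(\widehat{\mathfrak{gl}}(m|n))_{\rm comp}$. Uniqueness is automatic: by \eqref{eq1297}--\eqref{eq1298} the elements $x_{i,0}^{\pm},h_{i,0},h_{i,1}$ already generate $Y_{\ve_1,\ve_2}(\widehat{\mathfrak{sl}}(m|n))$. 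Before touching relations one must check that the proposed $\ev_{\ve_1,\ve_2}(h_{i,1})$ genuinely lands in the degreewise completion: every monomial $E_{a,k}(-s)E_{k,b}(s)$ or $E_{a,k}(-s-1)E_{k,b}(s+1)$ occurring in the infinite sums has degree $0$, only finitely many of them appear for each fixed $s$, and letting $s\to\infty$ pushes the factors arbitrarily far into the defining filtration of \cite{MNT}; hence each such sum converges to a bona fide element of $U(\widehat{\mathfrak{gl}}(m|n))_{\rm comp}$, and the same remark makes all the brackets computed below well defined.

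The relations involving only the level-zero generators --- \eqref{eq2.2}, \eqref{eq2.4}, \eqref{eq2.7}, \eqref{eq2.8}, \eqref{eq2.9}, and \eqref{eq2.1} in the case $r=s=0$ --- require nothing new: the elements $x_i^{\pm}$ and $h_i$ are, up to the normalisations built into $a_{ij}$ and the parity assignments, the Chevalley generators of $\widehat{\mathfrak{sl}}(m|n)\subset\widehat{\mathfrak{gl}}(m|n)$, so the Cartan-matrix relations and the super Serre relations \eqref{eq2.7}--\eqref{eq2.9} are precisely the defining relations of the affine Lie superalgebra. Here the choice $h_0=-E_{1,1}-E_{m+n,m+n}+c$ and $x_0^{\pm}$ built from $E_{m+n,1}\otimes t^{\pm1}$ realises the affine node correctly, and the normalisation $z=1$ pins down the residual scalar in $[x_0^+,x_0^-]$.

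The real content is the verification of the relations that see $h_{i,1}$: relation \eqref{eq2.1} with an index at level one (that is, $[\ev_{\ve_1,\ve_2}(h_{i,1}),h_j]=0$, immediate from weight considerations, and $[\ev_{\ve_1,\ve_2}(h_{i,1}),\ev_{\ve_1,\ve_2}(h_{j,1})]=0$), relation \eqref{eq2.5}, relation \eqref{eq2.3}, and relation \eqref{eq2.6}. For \eqref{eq2.5} one computes $[\ev_{\ve_1,\ve_2}(\tilde h_{i,1}),x_j^{\pm}]$ directly from the bracket $[E_{a,b}(u),E_{c,d}(v)]=\delta_{b,c}E_{a,d}(u+v)-(-1)^{(p(a)+p(b))(p(c)+p(d))}\delta_{a,d}E_{c,b}(u+v)+(\text{central})$; the double sums telescope and, after carrying the supertrace signs $(-1)^{p(k)}$ through, collapse to a single level-one expression, which one then \emph{defines} to be $\ev_{\ve_1,\ve_2}(x_{j,1}^{\pm})$ (checking that the two admissible neighbours $i$ with $a_{ij}\ne0$ give the same value). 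Relation \eqref{eq2.3} then follows by bracketing this element against $x_i^{\mp}$, relation \eqref{eq2.6} by a longer but parallel computation, and the commutativity $[\ev_{\ve_1,\ve_2}(h_{i,1}),\ev_{\ve_1,\ve_2}(h_{j,1})]=0$ once $\ev_{\ve_1,\ve_2}(h_{i,1})$ is recognised, after reindexing, as the degree-zero part of a mutually commuting family of currents. In each of these computations the hypotheses $\hbar c=(-m+n)\ve_1$ and $z=1$ are exactly the identities needed for the anomalous central terms --- produced whenever the summation index $k$ crosses the boundary rows $1$ and $m+n$, and for the affine node $i=0$ where the generators carry a $t^{\pm1}$ --- to cancel against the coefficient of $h_0$ inside $\ev_{\ve_1,\ve_2}(h_{0,1})$.

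I expect the main obstacle to be purely the bookkeeping in that last block: the super-signs $(-1)^{p(k)}$ and $(-1)^{p(i)+p(i+1)}$ multiply through every term, the sums are infinite so the cancellations must be organised degree by degree, and the affine node needs separate treatment owing to the spectral shift and the central element. A convenient economy is to establish everything first at a non-affine even node $i$ (where $a_{ii}$ has a definite sign and no central term intervenes), and then to reach the isotropic odd nodes $i=0,m$ and the affine node $i=0$ by exploiting the symmetries of the picture --- the $Y_{\ve_1,\ve_2}$-automorphism interchanging $x^+\leftrightarrow x^-$ composed with the Chevalley involution of $\widehat{\mathfrak{gl}}(m|n)$, together with the rotation of the affine Dynkin diagram --- so that only a bounded number of genuinely inequivalent cases need to be done by hand.
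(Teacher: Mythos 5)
This theorem is imported from \cite{U2} (Theorem~5.1); the present paper states it without proof, so there is no internal argument to compare against. Your plan --- reduce to the minimalistic presentation of Theorem~\ref{Mini}, check that the quadratic sums converge in the degreewise completion (correct: the right-hand factors $E_{k,i}(s)$ have degree $s\to\infty$), dispose of the level-zero relations via the Chevalley-generator realization of $\widehat{\mathfrak{sl}}(m|n)$ inside $\widehat{\mathfrak{gl}}(m|n)$, define $\ev_{\ve_1,\ve_2}(x^{\pm}_{j,1})$ through \eqref{eq2.5}, and verify \eqref{eq2.3}, \eqref{eq2.6} and the level-one part of \eqref{eq2.1} by direct computation, with $\hbar c=(-m+n)\ve_1$ and $z=1$ absorbing the anomalous central and normal-ordering terms --- is exactly the route taken in \cite{U2}, and the uniqueness argument via \eqref{eq1297}--\eqref{eq1298} is correct. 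Two caveats. First, the genuinely hard verification is $[\ev_{\ve_1,\ve_2}(h_{i,1}),\ev_{\ve_1,\ve_2}(h_{j,1})]=0$; ``recognised as the degree-zero part of a mutually commuting family of currents'' is an assertion rather than an argument, and in \cite{U2} (as in the non-super antecedents of Guay and Kodera) this is done by a lengthy explicit computation. Second, the proposed economy via ``the rotation of the affine Dynkin diagram'' is not available: for $m\neq n$ no nontrivial rotation of the cycle preserves the set $\{0,m\}$ of isotropic nodes (a rotation by $k$ would force $2m\equiv 0 \bmod (m+n)$, i.e.\ $m=n$), and only the reflection $i\mapsto m-i$ survives as a diagram symmetry; so the case analysis cannot be compressed as much as you hope, and the affine node and the node $m$ each require separate hands-on treatment.
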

\begin{Remark}
In the case when $\mathfrak{g}$ is $\widehat{\mathfrak{sl}}(n)$, the evaluation map is defined in \cite{Gu1} and \cite{K1}. In this case, Kodera \cite{K2} showed that the image of the evaluation is dense in the completion of $U(\widehat{\mathfrak{gl}}(m|n)$. However, since the proof in \cite{K2} needs a braid group, we cannot prove the same statement in the super setting.
\end{Remark}
\section{The surjectivity of the evaluation map}
In this section, we show that the image of $\ev_{\ve_1,\ve_2}$ is dense in the completion of $U(\widehat{\mathfrak{gl}}(m|n))$ provided that $\ve_1\neq0$. 
By the definition of $\ev_{\ve_1,\ve_2}$, the image of $\ev_{\ve_1,\ve_2}$ contains $h_i$ and $x^\pm_i$. Since $h_i$ and $x^\pm_i$ are generators of $\widehat{\mathfrak{sl}}(m|n)$, the image of $\ev_{\ve_1,\ve_2}$ contains $\widehat{\mathfrak{sl}}(m|n)$. Thus, it is enough to prove that the image of $\ev_{\ve_1,\ve_2}$ contains $E_{i,i}(s)$ for all $1\leq i\leq m+n$ and $s\in\mathbb{Z}$.

First, we show that the image of $\ev_{\ve_1,\ve_2}$ contains $E_{i,i}(0)$.
\begin{Theorem}\label{T0}
We obtain
\begin{align}
&\quad\ev_{\ve_1,\ve_2}(\displaystyle\sum_{0\leq i \leq m+n-1}\limits \tilde{h}_{i,1})\nonumber\\
&=(\alpha - (m-n) \varepsilon_1)h_{0}+\displaystyle\sum_{1\leq i\leq m+n-1}\limits(\alpha - (i-2\delta(i\geq m+1)(i-m))\varepsilon_1) h_{i}-c\hbar E_{m+n,m+n}.\label{942}
\end{align}
\end{Theorem}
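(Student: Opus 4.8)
The plan is to prove \eqref{942} by computing the left-hand side directly from Theorem~\ref{thm:main}. Write $\hbar=\ve_1+\ve_2$. Since $\ev_{\ve_1,\ve_2}$ is an algebra homomorphism and $\tilde{h}_{i,1}=h_{i,1}-\frac{\hbar}{2}h_{i,0}^2$ with $\ev_{\ve_1,\ve_2}(h_{i,0})=h_i$, we have
\[
\ev_{\ve_1,\ve_2}\Big(\sum_{0\le i\le m+n-1}\tilde{h}_{i,1}\Big)=\sum_{0\le i\le m+n-1}\ev_{\ve_1,\ve_2}(h_{i,1})-\frac{\hbar}{2}\sum_{0\le i\le m+n-1}h_i^2.
\]
I would split each $\ev_{\ve_1,\ve_2}(h_{i,1})$ according to Theorem~\ref{thm:main} into: (a) the part linear in the $h_j$; (b) finitely many quadratic terms, namely $\hbar E_{m+n,m+n}(E_{1,1}-c)$ for $i=0$ and $-(-1)^{p(E_{i,i+1})}\hbar E_{i,i}E_{i+1,i+1}$ for $i\neq0$; and (c) the infinite sums of quadratic terms. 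The linear parts (a) are exactly the coefficients of $h_0$ and $h_i$ appearing on the right of \eqref{942}, so summing them reproduces the first two summands there. It therefore remains to show that the total contribution of (b), (c), and of the terms $-\frac{\hbar}{2}h_i^2$ equals $-c\hbar E_{m+n,m+n}$.

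The main point is a telescoping in the index $i$, which one should picture as running over the nodes of the affine Dynkin diagram, so that node $0$ links vertices $m+n$ and $1$. For $1\le j\le m+n$ set
\[
\Phi_j=\sum_{s\ge0}\Big(\sum_{k=1}^{j}(-1)^{p(k)}E_{j,k}(-s)E_{k,j}(s)+\sum_{k=j+1}^{m+n}(-1)^{p(k)}E_{j,k}(-s-1)E_{k,j}(s+1)\Big),
\]
an element of $U(\widehat{\mathfrak{gl}}(m|n))_{{\rm comp}}$. Reading off the formulas in Theorem~\ref{thm:main}, the infinite part (c) of $\ev_{\ve_1,\ve_2}(h_{i,1})$ equals $\hbar(-1)^{p(i)}\Phi_i$ from the $E_{i,\bullet}$-terms plus $-\hbar(-1)^{p(i+1)}$ times the variant of $\Phi_{i+1}$ whose inner sum is split at $i$ rather than at $i+1$ (with the roles of $(i,i+1)$ played by $(m+n,1)$ when $i=0$). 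The elementary observation is that shifting the splitting point of such a sum by one — that is, reindexing $s\mapsto s+1$ in one inner summand — changes the element by exactly one finite term of the form $\pm(-1)^{p(k)}E_{k,k}(0)^2$. Using this, I would bring the infinite part of $\ev_{\ve_1,\ve_2}(h_{i,1})-\frac{\hbar}{2}h_i^2$ into the shape $\hbar\big((-1)^{p(i)}\Phi_i-(-1)^{p(i+1)}\Phi_{i+1}\big)$ plus finitely many diagonal quadratic terms, the $-\frac{\hbar}{2}h_i^2$ being precisely what absorbs the $E_{k,k}(0)^2$ corrections. Summing over $0\le i\le m+n-1$ then makes the $\Phi_j$ telescope around the affine cycle; since $p(1)=0$ and $p(m+n)=1$, every genuinely infinite sum cancels and only finitely many diagonal quadratic terms and scalars survive.

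Finally I would collect this finite remainder together with the finite quadratic terms from (b) and simplify. Here one uses the telescopings $\sum_{i=1}^{m+n-1}h_i=E_{1,1}+E_{m+n,m+n}$ and $\sum_{i=0}^{m+n-1}h_i=c$, the expansion of each $h_i^2$ in the $E_{a,a}(0)$, and the standing hypotheses $z=1$ and $\hbar c=(-m+n)\ve_1$ to reconcile the scalar and central contributions; after all cancellations only $-c\hbar E_{m+n,m+n}$ remains, which is \eqref{942}. The main obstacle is this bookkeeping in the last two paragraphs: tracking every parity factor $(-1)^{p(k)}$, identifying exactly which finite correction is produced by each reindexing of a shifted infinite sum, and checking that, once the infinite sums have telescoped away, the surviving finite quadratic and scalar terms collapse to the single term $-c\hbar E_{m+n,m+n}$. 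No individual step is deep, but the computation is long and extremely sign-sensitive.
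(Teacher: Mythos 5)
Your proposal is correct and follows essentially the same route as the paper: a direct computation in which the infinite quadratic sums cancel pairwise around the affine cycle (your telescoping of the $\Phi_j$ is exactly the paper's reindexed pairwise cancellation, each reindexing leaving a finite $E_{k,k}(0)^2$ remainder that is absorbed by the $-\frac{\hbar}{2}h_{i,0}^2$ corrections), so that only the linear terms and $-c\hbar E_{m+n,m+n}$ survive. The only inessential inaccuracy is that neither $z=1$ nor $\hbar c=(-m+n)\ve_1$ is actually needed here; the $-c\hbar E_{m+n,m+n}$ comes directly from the $\hbar E_{m+n,m+n}(E_{1,1}-c)$ term in $\ev_{\ve_1,\ve_2}(h_{0,1})$.
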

\begin{proof}
By the definition of $\ev_{\ve_1,\ve_2}(h_{i,1})$, we obtain
\begin{gather*}
	\ev_{\ve_1,\ve_2}(\tilde{h}_{i,1}) = \begin{cases}
		(\alpha - (m-n) \varepsilon_1)h_{0} -\dfrac{1}{2} \hbar E_{m+n,m+n}^2-\dfrac{1}{2} \hbar E_{1,1}^2-c\hbar E_{m+n,m+n}\\
		\ -\hbar \displaystyle\sum_{s \geq 0} \limits\displaystyle\sum_{k=1}^{m+n}\limits{(-1)}^{p(k)}E_{m+n,k}(-s) E_{k,m+n}(s)\\
		\quad-\hbar \displaystyle\sum_{s \geq 0}\limits\displaystyle\sum_{k=1}^{m+n}\limits{(-1)}^{p(k)}E_{1,k}(-s-1) E_{k,1}(s+1)\\
		 \qquad\qquad\qquad\qquad\qquad\qquad\qquad\qquad\qquad\qquad\qquad\qquad\qquad\qquad\text{ if $i = 0$},\\
\\
		(\alpha - (i-2\delta(i\geq m+1)(i-m))\varepsilon_1) h_{i} -\dfrac{1}{2} \hbar E_{i,i}^2-\dfrac{1}{2} \hbar E_{i+1,i+1}^2\\
		\ + \hbar{(-1)}^{p(i)} \displaystyle\sum_{s \geq 0} \limits\displaystyle\sum_{k=1}^{i}\limits{(-1)}^{p(k)} E_{i,k}(-s) E_{k,i}(s)\\
		\quad +\hbar{(-1)}^{p(i)} \displaystyle\sum_{s \geq 0} \limits\displaystyle\sum_{k=i+1}^{m+n}\limits {(-1)}^{p(k)}E_{i,k}(-s-1) E_{k,i}(s+1) \\
		\qquad -\hbar{(-1)}^{p(i+1)}\displaystyle\sum_{s \geq 0}\limits\displaystyle\sum_{k=1}^{i}\limits{(-1)}^{p(k)}E_{i+1,k}(-s) E_{k,i+1}(s)\\
		 \qquad\quad-\hbar{(-1)}^{p(i+1)}\displaystyle\sum_{s \geq 0}\limits\displaystyle\sum_{k=i+1}^{m+n} \limits{(-1)}^{p(k)}E_{i+1,k}(-s-1) E_{k,i+1}(s+1)\\
		\qquad\qquad\qquad\qquad\qquad\qquad\qquad\qquad\qquad\qquad\qquad\qquad\qquad\qquad \text{ if $i \neq 0$}.
	\end{cases}
\end{gather*}
Then, we rewrite the left hand side of \eqref{942} as
\begin{align}
&(\alpha - (m-n) \varepsilon_1)h_{0}+\displaystyle\sum_{1\leq i\leq m+n-1}\limits(\alpha - (i-2\delta(i\geq m+1)(i-m))\varepsilon_1) h_{i}\nonumber\\
&\quad-\frac{\hbar}{2}(E_{1,1}^2+E_{m+n,m+n}^2)-c\hbar E_{m+n,m+n}-\displaystyle\sum_{1\leq i\leq m+n-1}\limits\frac{\hbar}{2}(E_{i,i}^2+E_{i+1,i+1}^2)\nonumber\\
&\quad+\displaystyle\sum_{1\leq i\leq m+n}\hbar{(-1)}^{p(i)} \displaystyle\sum_{s \geq 0}  \limits\displaystyle\sum_{k=1}^{i}\limits{(-1)}^{p(k)} E_{i,k}(-s) E_{k,i}(s)\nonumber\\
&\quad+\displaystyle\sum_{1\leq i\leq m+n}\hbar{(-1)}^{p(i)} \displaystyle\sum_{s \geq 0} \limits\displaystyle\sum_{k=i+1}^{m+n}\limits {(-1)}^{p(k)}E_{i,k}(-s-1) E_{k,i}(s+1)\nonumber\\
&\quad-\displaystyle\sum_{0\leq i\leq m+n-1}\hbar{(-1)}^{p(i+1)} \displaystyle\sum_{s \geq 0}  \limits\displaystyle\sum_{k=1}^{i}\limits{(-1)}^{p(k)} E_{i+1,k}(-s) E_{k,i+1}(s)\nonumber\\
&\quad-\displaystyle\sum_{0\leq i\leq m+n-1}\hbar{(-1)}^{p(i+1)} \displaystyle\sum_{s \geq 0} \limits\displaystyle\sum_{k=i+1}^{m+n}\limits {(-1)}^{p(k)}E_{i+1,k}(-s-1) E_{k,i+1}(s+1).\label{943}
\end{align}
Adding the first and third terms of \eqref{943}, we have
\begin{align}
&\displaystyle\sum_{1\leq i\leq m+n}\hbar{(-1)}^{p(i)} \displaystyle\sum_{s \geq 0}  \limits\displaystyle\sum_{k=1}^{i}\limits{(-1)}^{p(k)} E_{i,k}(-s) E_{k,i}(s)\nonumber\\
&\qquad\qquad-\displaystyle\sum_{1\leq j\leq m+n}\hbar{(-1)}^{p(j)} \displaystyle\sum_{s \geq 0}  \limits\displaystyle\sum_{k=1}^{j-1}\limits{(-1)}^{p(k)} E_{j,k}(-s) E_{k,j}(s)\nonumber\\
&=\hbar\displaystyle\sum_{1\leq i\leq m+n}\limits\displaystyle\sum_{s \geq 0} \limits E_{i,i}(-s) E_{i,i}(s).\label{946}
\end{align}
Adding the second and 4-th terms of \eqref{943}, we obtain
\begin{align}
&\displaystyle\sum_{1\leq i\leq m+n}\hbar{(-1)}^{p(i)} \displaystyle\sum_{s \geq 0} \limits\displaystyle\sum_{k=i+1}^{m+n}\limits {(-1)}^{p(k)}E_{i,k}(-s-1) E_{k,i}(s+1)\nonumber\\
&\qquad\qquad-\displaystyle\sum_{1\leq j\leq m+n}\hbar{(-1)}^{p(j)} \displaystyle\sum_{s \geq 0} \limits\displaystyle\sum_{k=j}^{m+n}\limits {(-1)}^{p(k)}E_{j,k}(-s-1) E_{k,j}(s+1)\nonumber\\
&=-\hbar\displaystyle\sum_{1\leq i\leq m+n}\limits\displaystyle\sum_{s \geq 0} \limits E_{i,i}(-s-1) E_{i,i}(s+1).\label{947}
\end{align}
Applying \eqref{946} and \eqref{947} to \eqref{943}, we find that the left hand side of \eqref{942} is equal to
\begin{align*}
&(\alpha - (m-n) \varepsilon_1)h_{0}+\displaystyle\sum_{1\leq i\leq m+n-1}\limits(\alpha - (i-2\delta(i\geq m+1)(i-m))\varepsilon_1) h_{i}\nonumber\\
&\quad-\frac{\hbar}{2}(E_{1,1}^2+E_{m+n,m+n}^2)-c\hbar E_{m+n,m+n}-\displaystyle\sum_{1\leq i\leq m+n-1}\limits\frac{\hbar}{2}(E_{i,i}^2+E_{i+1,i+1}^2)+\hbar\displaystyle\sum_{1\leq i\leq m+n}\limits E_{i,i}^2.
\end{align*}
By direct computation, it is equal to
\begin{align*}
(\alpha - (m-n) \varepsilon_1)h_{0}+\displaystyle\sum_{1\leq i\leq m+n-1}\limits(\alpha - (i-2\delta(i\geq m+1)(i-m))\varepsilon_1) h_{i}-c\hbar E_{m+n,m+n}.
\end{align*}
Thus, we have obtained Theorem~\ref{T0}.
\end{proof}
Since $h_i$ is contained in the image of $\ev_{\ve_1,\ve_2}$, the image of $\ev_{\ve_1,\ve_2}$ contains $c\hbar E_{m+n,m+n}$.
\begin{Corollary}\label{c1}
The image of $\ev_{\ve_1,\ve_2}$ contains $E_{m+n,m+n}$ provided that $\hbar c\neq0$.
\end{Corollary}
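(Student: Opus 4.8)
The plan is to read off Corollary~\ref{c1} as an immediate consequence of Theorem~\ref{T0}. First I would note that the image of $\ev_{\ve_1,\ve_2}$ is a $\mathbb{C}$-subalgebra of $U(\widehat{\mathfrak{gl}}(m|n))_{{\rm comp}}$, hence in particular a $\mathbb{C}$-linear subspace, and that it contains each $h_i$ since $h_i=\ev_{\ve_1,\ve_2}(h_{i,0})$ by the defining formulas of the evaluation map in Theorem~\ref{thm:main}. It also contains $\ev_{\ve_1,\ve_2}\bigl(\sum_{0\leq i\leq m+n-1}\tilde{h}_{i,1}\bigr)$, because $\tilde{h}_{i,1}=h_{i,1}-\tfrac{\ve_1+\ve_2}{2}h_{i,0}^2$ is a well-defined element of $Y_{\ve_1,\ve_2}(\widehat{\mathfrak{sl}}(m|n))$.

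Then I would simply subtract. By Theorem~\ref{T0}, the element $\ev_{\ve_1,\ve_2}\bigl(\sum_{i}\tilde{h}_{i,1}\bigr)$ equals a fixed $\mathbb{C}$-linear combination of the $h_i$ (with coefficients $\alpha-(m-n)\ve_1$ and $\alpha-(i-2\delta(i\geq m+1)(i-m))\ve_1$) together with the single extra term $-c\hbar E_{m+n,m+n}$. Removing that linear combination of the $h_i$, all of which lie in the image, shows that $-c\hbar E_{m+n,m+n}$ lies in the image. Under the hypothesis $\hbar c\neq 0$ the scalar $-c\hbar$ is invertible in $\mathbb{C}$, so dividing by it yields $E_{m+n,m+n}$ in the image, which is exactly the assertion.

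There is no genuine obstacle here: the mathematical content is entirely carried by Theorem~\ref{T0}, and what remains is only bookkeeping — checking that the relevant Yangian elements are well-defined (clear from Definition~\ref{Def}), that the image is closed under subtraction and scalar multiplication (clear, since it is a subalgebra), and observing that $\hbar c\neq 0$ is precisely what makes the offending scalar invertible. It may be worth remarking that this is the base case $s=0$ of the broader goal of showing that the image contains $E_{i,i}(s)$ for all $1\leq i\leq m+n$ and all $s\in\mathbb{Z}$, and that the subsequent steps will bootstrap from this, together with the fact that the image already contains $\widehat{\mathfrak{sl}}(m|n)$, to reach the general case.
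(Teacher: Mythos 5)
Your argument is correct and is exactly the paper's: the paper deduces Corollary~\ref{c1} in one sentence by subtracting the $h_i$-terms (which lie in the image) from the right-hand side of Theorem~\ref{T0} to isolate $-c\hbar E_{m+n,m+n}$, then divides by the nonzero scalar $-c\hbar$. No differences worth noting.
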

Next, let us show that the completion of the image of $\ev_{\ve_1,\ve_2}$ contains $E_{i,i}(s)\ (s\neq0)$.
\begin{Theorem}\label{Thm125}
For all $i\neq0$, we obtain
\begin{align*}
&\quad[\ev_{\ve_1,\ve_2}(h_{i,1}),({(-1)}^{p(i)}E_{i,i}-{(-1)}^{p(i+1)}E_{i+1,i+1})t^a]\\
&=\hbar\displaystyle\sum_{s \geq 0}  \limits \delta_{s+a,0}scE_{i,i}(-s)-\hbar\displaystyle\sum_{s \geq 0}  \limits \delta_{-s+a,0}scE_{i,i}(s)\\
&\quad+\hbar\displaystyle\sum_{s \geq 0}\limits \delta_{s+1+a,0}(s+1)cE_{i+1,i+1}(-s-1)-\hbar\displaystyle\sum_{s \geq 0}\limits \delta_{-s-1+a,0}(s+1)cE_{i+1,i+1}(s+1)\\
&\quad+\text{sum of elements of the completion of $U(\widehat{\mathfrak{sl}}(m|n))$}.
\end{align*}
\end{Theorem}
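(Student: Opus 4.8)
The plan is to compute the commutator on the left-hand side directly using the explicit formula for $\ev_{\ve_1,\ve_2}(h_{i,1})$ given in Theorem~\ref{thm:main}. Writing $H = {(-1)}^{p(i)}E_{i,i} - {(-1)}^{p(i+1)}E_{i+1,i+1}$, the element $\ev_{\ve_1,\ve_2}(h_{i,1})$ is (up to the term $(\alpha - \cdots)h_i$, which commutes with $H t^a$ since $h_i$ and $H$ are both diagonal Cartan elements, and up to the term $-{(-1)}^{p(E_{i,i+1})}\hbar E_{i,i}E_{i+1,i+1}$, whose bracket with $H t^a$ lands in the completion of $U(\widehat{\mathfrak{sl}}(m|n))$) a sum of quadratic expressions of the form $\hbar{(-1)}^{p(i)}{(-1)}^{p(k)} E_{i,k}(-s)E_{k,i}(s)$ and similar terms with shifted modes and with $i$ replaced by $i+1$. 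So the main task is: bracket $Ht^a$ with each of these four double sums.

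First I would record the elementary bracket $[E_{j,j}(a), E_{p,q}(r)] = (\delta_{j,p} - \delta_{j,q})E_{p,q}(a+r) + (\text{central terms})$, where the central terms come from the $\widehat{\mathfrak{gl}}(m|n)$ relations: a $c$-term proportional to $a\,\delta_{a+r,0}\,\mathrm{str}(E_{j,j}E_{p,q})$ and a $z$-term, which by the assumption $z=1$ contributes, but is diagonal and (after summing) will get absorbed into the $\widehat{\mathfrak{sl}}$-part or cancel. Then I would apply $[Ht^a, -]$ term by term to $\sum_{s\geq 0}\sum_{k=1}^{i}(-1)^{p(k)}E_{i,k}(-s)E_{k,i}(s)$ and the other three sums. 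For the "non-central" part of each bracket, $E_{i,k}(-s)E_{k,i}(s)$ and its relatives get mapped to expressions still quadratic in the $E$'s with a shifted total degree $a$ — these manifestly lie in the completion of $U(\widehat{\mathfrak{sl}}(m|n))$ once one checks that the genuinely $E_{j,j}$-diagonal pieces that arise reassemble into traceless combinations or get cancelled between the four sums (this is exactly the bookkeeping that made Theorem~\ref{T0} work, so I expect the same cancellation pattern). The surviving contributions come precisely from the central $c$-terms: bracketing $E_{i,i}(a)$ against $E_{i,k}(-s)$ or $E_{k,i}(s)$ produces $\pm a\,\delta(\cdots)\,\mathrm{str}(\cdots)c$, and only the $k=i$ (resp. $k=i+1$ for the $E_{i+1,\cdot}$ sums) diagonal term in each sum contributes a supertrace equal to $\pm 1$. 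Collecting these, and being careful that the mode index in $E_{i,k}(-s)$ runs over $s\geq 0$ while in $E_{k,i}(s)$ it runs over $s\geq 0$ too (so the $\delta$-constraints $\delta_{s+a,0}$, $\delta_{-s+a,0}$ appear), yields exactly the four explicit central-looking sums $\hbar\sum_{s\geq 0}\delta_{s+a,0}sc E_{i,i}(-s) - \hbar\sum_{s\geq 0}\delta_{-s+a,0}sc E_{i,i}(s) + (\text{same with } i+1, \text{ shifted mode } s+1)$ claimed in the statement.

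The step I expect to be the main obstacle is the reorganization of the non-central quadratic output: a priori $[Ht^a, E_{i,k}(-s)E_{k,i}(s)]$ produces terms like $E_{i,k}(-s+a)E_{k,i}(s) \pm E_{i,k}(-s)E_{k,i}(s+a)$, and for $k=i$ or $k=i+1$ the two factors can be diagonal, so one gets genuine $E_{j,j}(r)E_{j,j}(r')$ and even linear-in-$E_{j,j}$ terms after normal-ordering/using the $\widehat{\mathfrak{gl}}$ relations once more; one must verify these all either cancel among the four sums (with their signs ${(-1)}^{p(i)}$, $-{(-1)}^{p(i+1)}$) or combine into traceless combinations lying in $U(\widehat{\mathfrak{sl}}(m|n))_{\mathrm{comp}}$, together with convergent infinite sums that are legitimate elements of the completion. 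I would handle this by splitting each of the four double sums into its $k\neq i, i\pm 1$ part (trivially in the $\widehat{\mathfrak{sl}}$-completion after bracketing, since no new diagonal appears) and its boundary $k\in\{i,i+1\}$ part, and treating the latter finitely many terms by hand; the telescoping between the $E_{i,\cdot}$-sums and the $E_{i+1,\cdot}$-sums, which is the engine behind Theorem~\ref{T0}, is what makes the diagonal junk collapse. Once that is done, only the $c$-proportional terms survive explicitly, giving the asserted formula.
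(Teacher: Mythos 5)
Your proposal is correct and follows essentially the same route as the paper: expand $\ev_{\ve_1,\ve_2}(h_{i,1})$ into its six constituents, bracket each with $({(-1)}^{p(i)}E_{i,i}-{(-1)}^{p(i+1)}E_{i+1,i+1})t^a$, observe that the off-diagonal quadratic outputs lie in $U(\widehat{\mathfrak{sl}}(m|n))_{\rm comp}$, and extract the surviving central $c$-terms from the diagonal summands $k=i$ and $k=i+1$ of the third and sixth sums. The only small discrepancy is that no cross-telescoping between the four double sums is actually needed: since diagonal modes commute up to central terms, each sum separately produces no genuine $E_{j,j}(r)E_{j,j}(r')$ junk, which your fallback plan of treating the boundary $k\in\{i,i+1\}$ terms by hand would reveal immediately.
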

\begin{proof}
The proof is done by direct computation.
By the definition of $\ev_{\ve_1,\ve_2}(h_{i,1})$, we have
\begin{align}
&\quad[\ev_{\ve_1,\ve_2}(h_{i,1}),({(-1)}^{p(i)}E_{i,i}-{(-1)}^{p(i+1)}E_{i+1,i+1})t^a]\nonumber\\
&=[(\alpha - (i-2\delta(i\geq m+1)(i-m))\varepsilon_1) h_{i} ,({(-1)}^{p(i)}E_{i,i}-{(-1)}^{p(i+1)}E_{i+1,i+1})t^a]\nonumber\\
&\quad-{(-1)}^{p(E_{i,i+1})} \hbar [E_{i,i}E_{i+1,i+1}, ({(-1)}^{p(i)}E_{i,i}-{(-1)}^{p(i+1)}E_{i+1,i+1})t^a]\nonumber\\
&\quad+[\hbar{(-1)}^{p(i)} \displaystyle\sum_{s \geq 0}  \limits\displaystyle\sum_{k=1}^{i}\limits{(-1)}^{p(k)} E_{i,k}(-s) E_{k,i}(s),({(-1)}^{p(i)}E_{i,i}-{(-1)}^{p(i+1)}E_{i+1,i+1})t^a]\nonumber\\
&\quad+[\hbar{(-1)}^{p(i)} \displaystyle\sum_{s \geq 0} \limits\displaystyle\sum_{k=i+1}^{m+n}\limits {(-1)}^{p(k)}E_{i,k}(-s-1) E_{k,i}(s+1),({(-1)}^{p(i)}E_{i,i}-{(-1)}^{p(i+1)}E_{i+1,i+1})t^a]\nonumber\\
&\quad -[\hbar{(-1)}^{p(i+1)}\displaystyle\sum_{s \geq 0}\limits\displaystyle\sum_{k=1}^{i}\limits{(-1)}^{p(k)}E_{i+1,k}(-s) E_{k,i+1}(s),({(-1)}^{p(i)}E_{i,i}-{(-1)}^{p(i+1)}E_{i+1,i+1})t^a]\nonumber\\
&\quad-[\hbar{(-1)}^{p(i+1)}\displaystyle\sum_{s \geq 0}\limits\displaystyle\sum_{k=i+1}^{m+n} \limits{(-1)}^{p(k)}E_{i+1,k}(-s-1) E_{k,i+1}(s+1),\nonumber\\
&\qquad\qquad\qquad\qquad\qquad\qquad\qquad\qquad\qquad\qquad\qquad({(-1)}^{p(i)}E_{i,i}-{(-1)}^{p(i+1)}E_{i+1,i+1})t^a].\label{190}
\end{align}
We can rewrite each terms of the right hand side of \eqref{190}. By an easy computation, we find that the first two terms of the right hand side of \eqref{190}. Other terms are computed as follows.
\begin{Claim}\label{Claim1}
\textup{(1)}\ The 4-th and 5-th terms of the right hand side of \eqref{190} are elements of the completion of $U(\widehat{\mathfrak{sl}}(m|n))$.

\textup{(2)}\ We can rewrite the third term of the right hand side of \eqref{190} as
\begin{align} 
&\hbar{(-1)}^{p(i)} \displaystyle\sum_{s \geq 0}  \limits \delta_{s+a,0}scE_{i,i}(-s)-\hbar\displaystyle\sum_{s \geq 0}  \limits \delta_{-s+a,0}scE_{i,i}(s)\nonumber\\
&\quad+\text{an element of the completion of $U(\widehat{\mathfrak{sl}}(m|n))$}.\label{920}
\end{align}
\textup{(3)}\ We can rewrite 6-th term of the right hand side of \eqref{190} as
\begin{align}
&\hbar\displaystyle\sum_{s \geq 0}\limits \delta_{s+1+a,0}(s+1)cE_{i+1,i+1}(-s-1)-\hbar\displaystyle\sum_{s \geq 0}\limits \delta_{-s-1+a,0}(s+1)cE_{i+1,i+1}(s+1)\nonumber\\
&\quad+\text{an element of the completion of $U(\widehat{\mathfrak{sl}}(m|n))$}.\label{921}
\end{align}
\end{Claim}
Assuming Claim~\ref{Claim1}, we obtain Theorem~\ref{Thm125} by adding \eqref{920} and \eqref{921}.
In order to complete the proof of Theorem~\ref{Thm125}, we prove Claim~\ref{Claim1}.
\begin{proof}[the proof of Claim~\ref{Claim1}]
\textup{(1)}\ The proof is due to direct computation.
First we prove the 4-th case. We can rewrite the 4-th term of the right hand side of \eqref{190} as follows;
\begin{align}
&\hbar{(-1)}^{p(i)} \displaystyle\sum_{s \geq 0} \limits\displaystyle\sum_{k=i+1}^{m+n}\limits {(-1)}^{p(k)}E_{i,k}(-s-1) [E_{k,i}(s+1),({(-1)}^{p(i)}E_{i,i}-{(-1)}^{p(i+1)}E_{i+1,i+1})t^a]\nonumber\\
&\quad+\hbar{(-1)}^{p(i)} \displaystyle\sum_{s \geq 0} \limits\displaystyle\sum_{k=i+1}^{m+n}\limits {(-1)}^{p(k)}[E_{i,k}(-s-1),({(-1)}^{p(i)}E_{i,i}-{(-1)}^{p(i+1)}E_{i+1,i+1})t^a] E_{k,i}(s+1).\label{200}
\end{align}
We rewrite each terms of the right hand side of \eqref{200}. By direct computation, we can rewrite the first term of the right hand side of \eqref{200} as
\begin{align}
&\hbar\displaystyle\sum_{s \geq 0} \limits\displaystyle\sum_{k=i+1}^{m+n}\limits {(-1)}^{p(k)}E_{i,k}(-s-1)E_{k,i}(s+1+a)\nonumber\\
&\qquad\qquad+\hbar{(-1)}^{p(i)+p(i+1)} \displaystyle\sum_{s \geq 0} \limits{(-1)}^{p(i+1)}E_{i,i+1}(-s-1)E_{i+1,i}(s+1+a).\label{201}
\end{align}
By direct computation, we can rewrite the second term of the right hand side of \eqref{200} as
\begin{align}
&-\hbar{(-1)}^{p(i)} \displaystyle\sum_{s \geq 0} \limits E_{i,i+1}(-s-1+a)E_{i+1,i}(s+1)\nonumber\\
&\qquad\qquad-\hbar\displaystyle\sum_{s \geq 0} \limits\displaystyle\sum_{k=i+1}^{m+n}\limits {(-1)}^{p(k)}E_{i,k}(-s-1+a) E_{k,i}(s+1).\label{202}
\end{align}
Adding \eqref{201} and \eqref{202}, we obtain
\begin{align}
&\quad\text{the 4-th term of the right hand side of \eqref{190}}\nonumber\\
&=\hbar\displaystyle\sum_{s \geq 0} \limits\displaystyle\sum_{k=i+1}^{m+n}\limits {(-1)}^{p(k)}E_{i,k}(-s-1)E_{k,i}(s+1+a)+\hbar{(-1)}^{p(i)} \displaystyle\sum_{s \geq 0} \limits E_{i,i+1}(-s-1)E_{i+1,i}(s+1+a)\nonumber\\
&\quad-\hbar{(-1)}^{p(i)} \displaystyle\sum_{s \geq 0} \limits E_{i,i+1}(-s-1+a)E_{i+1,i}(s+1)\nonumber\\
&\quad-\hbar\displaystyle\sum_{s \geq 0} \limits\displaystyle\sum_{k=i+1}^{m+n}\limits {(-1)}^{p(k)}E_{i,k}(-s-1+a) E_{k,i}(s+1),\label{204}
\end{align}
Since all of the terms of the right hand side of \eqref{204} are elements of the completion of $U(\widehat{\mathfrak{sl}}(m|n))$, the 4-th term of the right hand side of \eqref{190} is an element of the completion of $U(\widehat{\mathfrak{sl}}(m|n))$.

Next, we prove the 5-th case. Let us rewrite the 5-th term of the right hand side of \eqref{190} as follows;
\begin{align}
&-\hbar{(-1)}^{p(i+1)}\displaystyle\sum_{s \geq 0}\limits\displaystyle\sum_{k=1}^{i}\limits{(-1)}^{p(k)}E_{i+1,k}(-s) [E_{k,i+1}(s),({(-1)}^{p(i)}E_{i,i}-{(-1)}^{p(i+1)}E_{i+1,i+1})t^a]\nonumber\\
&\quad-\hbar{(-1)}^{p(i+1)}\displaystyle\sum_{s \geq 0}\limits\displaystyle\sum_{k=1}^{i}\limits{(-1)}^{p(k)}[E_{i+1,k}(-s),({(-1)}^{p(i)}E_{i,i}-{(-1)}^{p(i+1)}E_{i+1,i+1})t^a] E_{k,i+1}(s).\label{211}
\end{align}
We rewrite each terms of the right hand side of \eqref{211}. By direct computation, we can rewrite the first term of the right hand side of \eqref{211} as
\begin{align}
&\hbar\displaystyle\sum_{s \geq 0}\limits\displaystyle\sum_{k=1}^{i}\limits{(-1)}^{p(k)}E_{i+1,k}(-s)E_{k,i+1}(s+a)+\hbar{(-1)}^{p(i+1)}\displaystyle\sum_{s \geq 0}\limits E_{i+1,i}(-s) E_{i,i+1}(s+a).\label{212}
\end{align}
By direct computation, we can also rewrite the first term of the right hand side of \eqref{211} as
\begin{align}
&-\hbar{(-1)}^{p(i+1)}\displaystyle\sum_{s \geq 0}\limits E_{i+1,i}(-s+a)E_{i,i+1}(s)-\hbar\displaystyle\sum_{s \geq 0}\limits\displaystyle\sum_{k=1}^{i}\limits{(-1)}^{p(k)}E_{i+1,k}(a-s)E_{k,i+1}(s).\label{213}
\end{align}
Adding \eqref{212} and \eqref{213}, we have
\begin{align}
&\quad\text{the 5-th term of the right hand side of \eqref{190}}\nonumber\\
&=\hbar\displaystyle\sum_{s \geq 0}\limits\displaystyle\sum_{k=1}^{i}\limits{(-1)}^{p(k)}E_{i+1,k}(-s)E_{k,i+1}(s+a)+\hbar{(-1)}^{p(i+1)}\displaystyle\sum_{s \geq 0}\limits E_{i+1,i}(-s) E_{i,i+1}(s+a)\nonumber\\
&\quad-\hbar{(-1)}^{p(i+1)}\displaystyle\sum_{s \geq 0}\limits E_{i+1,i}(-s+a)E_{i,i+1}(s)-\hbar\displaystyle\sum_{s \geq 0}\limits\displaystyle\sum_{k=1}^{i}\limits{(-1)}^{p(k)}E_{i+1,k}(a-s)E_{k,i+1}(s).\label{214}
\end{align}
Since all of the terms of the right hand side of \eqref{214} are elements of the completion of $U(\widehat{\mathfrak{sl}}(m|n))$, the 5-th term of the right hand side of \eqref{190} is an element of the completion of $U(\widehat{\mathfrak{sl}}(m|n))$.

\textup{(2)}\ 
The proof is due to direct computation. 
Let us rewrite the third term of the right hand side of \eqref{190} as follows;
\begin{align}
&\hbar{(-1)}^{p(i)} \displaystyle\sum_{s \geq 0}  \limits\displaystyle\sum_{k=1}^{i}\limits{(-1)}^{p(k)} E_{i,k}(-s) [E_{k,i}(s),({(-1)}^{p(i)}E_{i,i}-{(-1)}^{p(i+1)}E_{i+1,i+1})t^a]\nonumber\\
&\quad+\hbar{(-1)}^{p(i)} \displaystyle\sum_{s \geq 0}  \limits\displaystyle\sum_{k=1}^{i}\limits{(-1)}^{p(k)} [E_{i,k}(-s),({(-1)}^{p(i)}E_{i,i}-{(-1)}^{p(i+1)}E_{i+1,i+1})t^a] E_{k,i}(s).\label{191}
\end{align}
We rewrite each terms of the right hand side of \eqref{191}. By direct computation, we can rewrite the first term of the right hand side of \eqref{191} as
\begin{align}
&\hbar\displaystyle\sum_{s \geq 0}  \limits\displaystyle\sum_{k=1}^{i}\limits{(-1)}^{p(k)} E_{i,k}(-s) E_{k,i}(s+a)-\hbar{(-1)}^{p(i)} \displaystyle\sum_{s \geq 0}  \limits E_{i,i}(-s)E_{i,i}(s+a)\nonumber\\
&\quad+\hbar\displaystyle\sum_{s \geq 0}  \limits \delta_{s+a,0}scE_{i,i}(-s)+\hbar\displaystyle\sum_{s \geq 0}  \limits \delta_{s+a,0}sE_{i,i}(-s)-\hbar\displaystyle\sum_{s \geq 0}  \limits \delta_{s+a,0}sE_{i,i}(-s)\nonumber\\
&=\hbar\displaystyle\sum_{s \geq 0}  \limits\displaystyle\sum_{k=1}^{i-1}\limits{(-1)}^{p(k)} E_{i,k}(-s) E_{k,i}(s+a)+\hbar \displaystyle\sum_{s \geq 0}  \limits \delta_{s+a,0}scE_{i,i}(-s).\label{192}
\end{align}
Similarly, we can rewrite the second term of the right hand side of \eqref{191} as
\begin{align}
&\hbar{(-1)}^{p(i)} \displaystyle\sum_{s \geq 0}  \limits E_{i,i}(-s+a)E_{i,i}(s)-\hbar\displaystyle\sum_{s \geq 0}  \limits\displaystyle\sum_{k=1}^{i}\limits{(-1)}^{p(k)} E_{i,k}(-s+a) E_{k,i}(s)\nonumber\\
&\quad-\hbar\displaystyle\sum_{s \geq 0}  \limits \delta_{-s+a,0}scE_{i,i}(s)-\hbar\displaystyle\sum_{s \geq 0}  \limits \delta_{-s+a,0}sE_{i,i}(s)+\hbar\displaystyle\sum_{s \geq 0}  \limits \delta_{-s+a,0}sE_{i,i}(s)\nonumber\\
&=-\hbar\displaystyle\sum_{s \geq 0}  \limits\displaystyle\sum_{k=1}^{i-1}\limits{(-1)}^{p(k)} E_{i,k}(-s+a) E_{k,i}(s)-\hbar\displaystyle\sum_{s \geq 0}  \limits \delta_{-s+a,0}scE_{i,i}(s).\label{193}
\end{align}
Adding \eqref{192} and \eqref{193}, we obtain
\begin{align}
&\quad\text{the third term of the right hand side of \eqref{190}}\nonumber\\
&=\hbar\displaystyle\sum_{s \geq 0}  \limits\displaystyle\sum_{k=1}^{i-1}\limits{(-1)}^{p(k)} E_{i,k}(-s) E_{k,i}(s+a)-\hbar\displaystyle\sum_{s \geq 0}  \limits\displaystyle\sum_{k=1}^{i-1}\limits{(-1)}^{p(k)} E_{i,k}(-s+a) E_{k,i}(s)\nonumber\\
&\quad+\hbar{(-1)}^{p(i)} \displaystyle\sum_{s \geq 0}  \limits \delta_{s+a,0}scE_{i,i}(-s)-\hbar\displaystyle\sum_{s \geq 0}  \limits \delta_{-s+a,0}scE_{i,i}(s).\label{551}
\end{align}
Since the first two terms of the right hand side of \eqref{551} are elements of the completion of $U(\widehat{\mathfrak{sl}}(m|n))$, we have obtained \eqref{920}.

\textup{(3)}\ We rewrite the 6-th term of the right hand side of \eqref{190} as follows;
\begin{align}
&-\hbar{(-1)}^{p(i+1)}\displaystyle\sum_{s \geq 0}\limits\displaystyle\sum_{k=i+1}^{m+n} \limits{(-1)}^{p(k)}E_{i+1,k}(-s-1) \nonumber\\
&\qquad\qquad\qquad\qquad\qquad\qquad\qquad\qquad[E_{k,i+1}(s+1),({(-1)}^{p(i)}E_{i,i}-{(-1)}^{p(i+1)}E_{i+1,i+1})t^a]\nonumber\\
&\quad-\hbar{(-1)}^{p(i+1)}\displaystyle\sum_{s \geq 0}\limits\displaystyle\sum_{k=i+1}^{m+n} \limits{(-1)}^{p(k)}[E_{i+1,k}(-s-1),\nonumber\\
&\qquad\qquad\qquad\qquad\qquad\qquad\qquad\qquad({(-1)}^{p(i)}E_{i,i}-{(-1)}^{p(i+1)}E_{i+1,i+1})t^a]E_{k,i+1}(s+1).\label{221}
\end{align}
We compute each terms of the right hand side of \eqref{221}. By direct computation, we can rewrite the first term of the right hand side of \eqref{221} as
\begin{align}
&\hbar\displaystyle\sum_{s \geq 0}\limits\displaystyle\sum_{k=i+1}^{m+n} \limits{(-1)}^{p(k)}E_{i+1,k}(-s-1)E_{k,i+1}(s+1+a)\nonumber\\
&\quad-\hbar{(-1)}^{p(i+1)}\displaystyle\sum_{s \geq 0}\limits E_{i+1,i+1}(-s-1)E_{i+1,i+1}(s+1+a)+\hbar\displaystyle\sum_{s \geq 0}\limits (s+1)c\delta_{s+1+a,0}E_{i+1,i+1}(-s-1)\nonumber\\
&\quad-\hbar\displaystyle\sum_{s \geq 0}\limits (s+1)\delta_{s+1+a,0}E_{i+1,i+1}(-s-1)+\hbar\displaystyle\sum_{s \geq 0}\limits (s+1)\delta_{s+1+a,0}E_{i+1,i+1}(-s-1)\nonumber\\
&=\hbar\displaystyle\sum_{s \geq 0}\limits\displaystyle\sum_{k=i+2}^{m+n} \limits{(-1)}^{p(k)}E_{i+1,k}(-s-1)E_{k,i+1}(s+1+a)+\hbar\displaystyle\sum_{s \geq 0}\limits \delta_{s+1+a,0}(s+1)cE_{i+1,i+1}(-s-1).\label{222}
\end{align}
By direct computation, we can also rewrite the second term of the right hand side of \eqref{221} as
\begin{align}
&\hbar{(-1)}^{p(i+1)}\displaystyle\sum_{s \geq 0}\limits E_{i+1,i+1}(-s-1+a)E_{i+1,i+1}(s+1)\nonumber\\
&\quad-\hbar\displaystyle\sum_{s \geq 0}\limits\displaystyle\sum_{k=i+1}^{m+n} \limits{(-1)}^{p(k)}E_{i+1,k}(-s-1+a)E_{k,i+1}(s+1)-\hbar\displaystyle\sum_{s \geq 0}\limits \delta_{-s-1+a,0}(s+1)cE_{i+1,i+1}(s+1)\nonumber\\
&\quad+\hbar\displaystyle\sum_{s \geq 0}\limits \delta_{-s-1+a,0}(s+1)E_{i+1,i+1}(s+1)-\hbar\displaystyle\sum_{s \geq 0}\limits \delta_{-s-1+a,0}(s+1)E_{i+1,i+1}(s+1)\nonumber\\
&=-\hbar\displaystyle\sum_{s \geq 0}\limits\displaystyle\sum_{k=i+2}^{m+n} \limits{(-1)}^{p(k)}E_{i+1,k}(-s-1+a)E_{k,i+1}(s+1)\nonumber\\
&\quad-\hbar\displaystyle\sum_{s \geq 0}\limits \delta_{-s-1+a,0}(s+1)cE_{i+1,i+1}(s+1).\label{223}
\end{align}
Adding \eqref{222} and \eqref{223}, we have
\begin{align}
&\quad\text{the 6-th term of the right hand side of \eqref{190}}\nonumber\\
&=\hbar\displaystyle\sum_{s \geq 0}\limits\displaystyle\sum_{k=i+2}^{m+n} \limits{(-1)}^{p(k)}E_{i+1,k}(-s-1)E_{k,i+1}(s+1+a)\nonumber\\
&\quad-\hbar\displaystyle\sum_{s \geq 0}\limits\displaystyle\sum_{k=i+2}^{m+n} \limits{(-1)}^{p(k)}E_{i+1,k}(-s-1+a)E_{k,i+1}(s+1)\nonumber\\
&\quad+\hbar\displaystyle\sum_{s \geq 0}\limits \delta_{s+1+a,0}(s+1)cE_{i+1,i+1}(-s-1)-\hbar\displaystyle\sum_{s \geq 0}\limits \delta_{-s-1+a,0}(s+1)cE_{i+1,i+1}(s+1).\label{215}
\end{align}
Since the first two terms of the right hand side of \eqref{215} are elements of the completion of $U(\widehat{\mathfrak{sl}}(m|n))$, we have obtained \eqref{921}.
\end{proof}
This completes the proof of Theorem~\ref{Thm125}.
\end{proof}
By the assumption that $m,n\geq2$ and $m\neq n$, we can take $1\leq i\leq m+n-1$ such that $p(i)=p(i+1)$. By Theorem~\ref{Thm125}, The completion of the image of $\ev_{\ve_1,\ve_2}$ contains $\hbar c(E_{i,i}+E_{i+1,i+1})t^a$ for all $a\neq0$. Provided that $\hbar c\neq0$, the completion of the image of $\ev_{\ve_1,\ve_2}$ contains $(E_{i,i}+E_{i+1,i+1})t^a$. By the assumption that  $p(i)=p(i+1)$, $(E_{i,i}+E_{i+1,i+1})t^a$ is not contained in $\widehat{\mathfrak{sl}}(m|n)$. Thus, we obtain the following corollary.
\begin{Corollary}\label{c2}
The completion of the image of $\ev_{\ve_1,\ve_2}$ contains $E_{i,i}t^a$ for all $a\neq0$ provided that $\hbar c\neq0$.
\end{Corollary}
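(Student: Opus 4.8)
The plan is to read $E_{i,i}(a)$ off the formula of Theorem~\ref{Thm125} by specialising it to $a\neq 0$. Fix an index $i$ with $1\le i\le m+n-1$ and $p(i)=p(i+1)$; such an index exists because $m\ge 2$ (take $i=1$, so that $p(1)=p(2)=0$). I would first record that $h_it^a=\bigl({(-1)}^{p(i)}E_{i,i}-{(-1)}^{p(i+1)}E_{i+1,i+1}\bigr)t^a$ lies in $\widehat{\mathfrak{sl}}(m|n)$, hence in the image of $\ev_{\ve_1,\ve_2}$; since $\ev_{\ve_1,\ve_2}$ is an algebra homomorphism its image is a subalgebra of $U(\widehat{\mathfrak{gl}}(m|n))_{{\rm comp}}$, and as $\ev_{\ve_1,\ve_2}(h_{i,1})$ belongs to that image the commutator $[\ev_{\ve_1,\ve_2}(h_{i,1}),h_it^a]$ belongs to the image of $\ev_{\ve_1,\ve_2}$ as well.

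Next I would evaluate the right-hand side of Theorem~\ref{Thm125} at a fixed $a\neq 0$. Of its four Kronecker-delta sums only two can contribute: for $a>0$ the $\delta_{-s+a,0}$ sum (at $s=a$) and the $\delta_{-s-1+a,0}$ sum (at $s=a-1$), and for $a<0$ the $\delta_{s+a,0}$ sum (at $s=-a$) and the $\delta_{s+1+a,0}$ sum (at $s=-a-1$); in either case the surviving terms add up to $-\hbar ac\bigl(E_{i,i}(a)+E_{i+1,i+1}(a)\bigr)$. Thus Theorem~\ref{Thm125} gives
\[
[\ev_{\ve_1,\ve_2}(h_{i,1}),h_it^a]=-\hbar ac\bigl(E_{i,i}(a)+E_{i+1,i+1}(a)\bigr)+v,
\]
where $v$ lies in the completion of $U(\widehat{\mathfrak{sl}}(m|n))$. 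Since $\widehat{\mathfrak{sl}}(m|n)$ is contained in the image of $\ev_{\ve_1,\ve_2}$, so is $U(\widehat{\mathfrak{sl}}(m|n))$, and hence the completion of $U(\widehat{\mathfrak{sl}}(m|n))$ is contained in the completion of the image of $\ev_{\ve_1,\ve_2}$; therefore $v$, the left-hand side, and consequently $-\hbar ac\bigl(E_{i,i}(a)+E_{i+1,i+1}(a)\bigr)$ all lie in the completion of the image. Dividing by the nonzero scalar $-\hbar ac$ shows that $E_{i,i}(a)+E_{i+1,i+1}(a)$ lies in the completion of the image of $\ev_{\ve_1,\ve_2}$ for every $a\neq 0$.

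Finally I would separate $E_{i,i}(a)$ from $E_{i+1,i+1}(a)$. Because $p(i)=p(i+1)$ we have $h_it^a={(-1)}^{p(i)}\bigl(E_{i,i}(a)-E_{i+1,i+1}(a)\bigr)$, which lies in $\widehat{\mathfrak{sl}}(m|n)$, hence in the image; combining it with $E_{i,i}(a)+E_{i+1,i+1}(a)$ via a half-sum and a half-difference yields $E_{i,i}(a)$ and $E_{i+1,i+1}(a)$ individually in the completion of the image, which is the assertion. If one wants $E_{j,j}(a)$ for a general $j$, one more elementary step suffices: $E_{j,j}(a)$ is a linear combination of $E_{i,i}(a)$ and the iterated bracket $\bigl[E_{j,i},[E_{i,j},E_{i,i}(a)]\bigr]$, a finite computation in $U(\widehat{\mathfrak{gl}}(m|n))$ whose inputs $E_{i,j},E_{j,i}$ already lie in $\widehat{\mathfrak{sl}}(m|n)$, hence in the image of $\ev_{\ve_1,\ve_2}$.

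I do not expect a serious obstacle here, since the substantive computation is exactly Theorem~\ref{Thm125} and the rest is bookkeeping. The two points that require a little care are: the evaluation of the delta sums, where for each sign of $a$ one must keep track of which nonnegative $s$ can occur so as to be sure that precisely $E_{i,i}(a)+E_{i+1,i+1}(a)$ appears (with no $s=0$ coincidence); and the observation that the error term $v$, although a priori merely an element of a completion, nevertheless belongs to the completion of the image of $\ev_{\ve_1,\ve_2}$ because the degreewise completion is monotone in its argument and $\widehat{\mathfrak{sl}}(m|n)$ already sits inside that image.
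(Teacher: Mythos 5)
Your proposal is correct and follows the same route as the paper: specialize the delta sums in Theorem~\ref{Thm125} at a fixed $a\neq0$ to extract $\mp a\hbar c\bigl(E_{i,i}(a)+E_{i+1,i+1}(a)\bigr)$ modulo the completion of $U(\widehat{\mathfrak{sl}}(m|n))$, divide by the nonzero scalar, and then separate the two diagonal entries using $h_it^a\in\widehat{\mathfrak{sl}}(m|n)$ with $p(i)=p(i+1)$. Your write-up is in fact more explicit than the paper's (which leaves the sign-of-$a$ case check, the half-sum/half-difference step, and the passage to general indices implicit), and your computations check out.
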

By the assumption that $\hbar c=-(m-n)\ve_1$, we find that $\hbar c$ is nonzero if and only if $\ve_1\neq0$. Under the assumption that  by Corollary~\ref{c1} and Corollary~\ref{c2}, the image of  $\ev_{\ve_1,\ve_2}$ contains $E_{i,i}t^s$ for all $s\in\mathbb{Z}$. Thus, we have the following theorem.
\begin{Theorem}
Provided that $\ve_1\neq0$, the image of $\ev_{\ve_1,\ve_2}$ is dense in $U(\widehat{\mathfrak{gl}}(m|n))_{{\rm comp}}$.
\end{Theorem}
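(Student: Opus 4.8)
The plan is to assemble the statement from Corollary~\ref{c1}, Corollary~\ref{c2}, and the observation made at the start of this section that $\widehat{\mathfrak{sl}}(m|n)$ lies in the image of $\ev_{\ve_1,\ve_2}$. The first step is to note that the hypothesis $\ve_1\neq0$ is precisely the hypothesis needed for those two corollaries: since $\hbar c=-(m-n)\ve_1$ and $m\neq n$, the scalar $\hbar c$ is nonzero if and only if $\ve_1\neq0$. The second step is a reduction: it suffices to prove that the closure $\overline{\Ima\ev_{\ve_1,\ve_2}}\subseteq U(\widehat{\mathfrak{gl}}(m|n))_{{\rm comp}}$ contains all of $\widehat{\mathfrak{gl}}(m|n)$. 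Indeed $\overline{\Ima\ev_{\ve_1,\ve_2}}$ is a closed subalgebra (multiplication being continuous for the degreewise topology of \cite{MNT}), so it then contains the subalgebra generated by $\widehat{\mathfrak{gl}}(m|n)$, namely $U(\widehat{\mathfrak{gl}}(m|n))$ itself, and being closed it contains the entire degreewise completion, which is what density means. Since $\widehat{\mathfrak{sl}}(m|n)\subseteq\Ima\ev_{\ve_1,\ve_2}$ already provides every off-diagonal $E_{i,j}(s)$ together with $c$ (and $z=1$ is a scalar), the only elements of $\widehat{\mathfrak{gl}}(m|n)$ left to produce are the diagonal loop elements $E_{i,i}(s)$ for $1\le i\le m+n$ and $s\in\mathbb{Z}$.

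The third step produces these. Because $m,n\ge2$ there is an index $i_0$ with $1\le i_0\le m+n-1$ and $p(i_0)=p(i_0+1)$; Corollary~\ref{c2} then puts $E_{i_0,i_0}(a)\in\overline{\Ima\ev_{\ve_1,\ve_2}}$ for every $a\neq0$, and Corollary~\ref{c1} puts $E_{m+n,m+n}(0)\in\Ima\ev_{\ve_1,\ve_2}$. To reach the remaining diagonal units one brackets with off-diagonal matrix units, which lie in $\widehat{\mathfrak{sl}}(m|n)\subseteq\Ima\ev_{\ve_1,\ve_2}$: for $s\neq0$ and any $j\neq i_0$ one has $[E_{j,i_0}(0),E_{i_0,j}(s)]=E_{j,j}(s)\pm E_{i_0,i_0}(s)$ with no central correction (as $s\neq0$), so $E_{j,j}(s)\in\overline{\Ima\ev_{\ve_1,\ve_2}}$; and for $s=0$ and any $j\neq m+n$ one has $[E_{j,m+n}(0),E_{m+n,j}(0)]=E_{j,j}(0)\pm E_{m+n,m+n}(0)$ with no central correction (both factors being supertraceless and carrying $t^0$), so $E_{j,j}(0)\in\Ima\ev_{\ve_1,\ve_2}$. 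Hence every $E_{i,i}(s)$ lies in $\overline{\Ima\ev_{\ve_1,\ve_2}}$, and the reduction of the previous paragraph finishes the proof.

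The essential content of the argument is already carried out in Theorems~\ref{T0} and~\ref{Thm125} and their corollaries; what is left, and what this plan describes, is bookkeeping. Two small points deserve care. First, the last link of the reduction uses that the closure of a subalgebra of $U(\widehat{\mathfrak{gl}}(m|n))_{{\rm comp}}$ is again a subalgebra, which is where it matters that the completion is the degreewise one in the sense of \cite{MNT}; this is genuinely needed, since some of the diagonal loop elements $E_{i,i}(s)$ with $s\neq0$ are obtained only in the closure and not in $\Ima\ev_{\ve_1,\ve_2}$ itself. Second, one must keep track of the $\mathbb{Z}/2$-signs in the super-brackets used to pass from $E_{i_0,i_0}$ and $E_{m+n,m+n}$ to the other diagonal units, and one should observe that the output $E_{i_0,i_0}(a)$ of Corollary~\ref{c2} is new information precisely because the choice $p(i_0)=p(i_0+1)$ makes $E_{i_0,i_0}+E_{i_0+1,i_0+1}$ fail to be supertraceless.
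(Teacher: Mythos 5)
Your proposal is correct and follows essentially the same route as the paper: reduce density to showing the closure of the image (a closed subalgebra) contains all of $\widehat{\mathfrak{gl}}(m|n)$, note that $\widehat{\mathfrak{sl}}(m|n)$ is already in the image, and supply the missing diagonal loop elements $E_{i,i}(s)$ from Corollary~\ref{c1} ($s=0$) and Corollary~\ref{c2} ($s\neq0$) using $\hbar c=-(m-n)\ve_1\neq0$. You merely make explicit some bookkeeping the paper leaves implicit (that the closure is a subalgebra, and how to pass from one diagonal unit to all of them via elements of $\widehat{\mathfrak{sl}}(m|n)$).
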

\bibliographystyle{plain}
\bibliography{syuu}
\vspace{15 mm}
affiliation: Graduate School of Science,
Kyoto University, Kyoto 606-8502, Japan\newline
email: udmamoru@kurims.kyoto-u.ac.jp

\end{document}